\newtheorem{theorem}{Theorem}
\newtheorem{defn}{Definition}
\newtheorem{prop}{Proposition}
\newtheorem{lemma}{Lemma}
\begin{document}
\title{Real analytic Bergman spaces}
\author{Mark G. Lawrence}
\maketitle
\begin{abstract}
The theory of CR wedge extension is combined with a study of moment conditions to construct a new class of Bergman-type spaces which are characterized by real analyticity, rather than holomorphicity. The spaces have dense subsets of real analytic functions which contain  entire functions as a proper subset. 
\end{abstract}
In this paper, the idea of a Bergman space is given a new class of examples, with novel properties. The main theorems are extensions of the author's results in \cite{La2}. The distinguishing property of all of the examples is that real analyticity is preserved under closure, due to moment conditions, but without requiring holomorphicity. 

This paper can be considered a combination of two strands of mathematical research which unexpectedly converged. On the one hand, there is the theory of Bergman spaces, now  100 years old; a very rich theory with potent applications in the sciences, and an active area of research in its own right. The other strand is polynomial approximation theory in several complex variables.
With Weierstrass' theorem,  mathematicians began studying the problem of finding the closure of a finitely generated algebra of functions on a compact set. A basic application of the $n$-dimensional version of the Weierstrass approximation theorem is the observation that on $\bf C^n$, the algebra generated by  $z, \overline z$ is dense in the continuous functions on any compact set. The idea that $\overline z$ makes an algebra consist of continuous functions is also seen in the Wermer theorem, which states that if $\overline z$ is added to the disc algebra $A(D)$, considered as an algebra of functions on the circle, then the continuous functions are the closure of the new algebra. For general information about Bergman spaces, see \cite{Zhu}. For information about polynomial approximation theory, see \cite{WerA}
and \cite{StPC}.

Polynomial approximation in several complex variables  has new features, especially when considering approximation on sub manifolds. In that case, as is well known, the  existence of complex tangent directions often guarantees that functions in the polynomial algebra extend to be holomorphic on some defined set, independent of the function. 
These theorems exists for smooth manifolds, but also for piecewise smooth unions of manifolds in $C^n$. The theory of analytic continuation in this setting falls under the heading of "edge of the wedge" theory. See \cite{Ber} for material about polynomial approximation in the CR setting, as well as wedge extension. 

Starting around the year 2000, Agranovsky and Globevnik, \cite{AgGl}, among others, noted  that analytic continuation techniques in several variables could be used to study regularity problems involving moments. 
A notable success of this idea was Tumanov's solution of the "strip problem", \cite{Tu}, which showed that several complex variables techniques could be used to solve problems in one variable where no one-variable technique seemed to suffice. 
The author learned of these techniques while working on an  a proof of the strip problem for $L^p$ functions (most of the theory in this area is for continuous or real analytic functions). 
In his 2015 paper \cite{La2}, he showed the first examples of using CR wedge extension to prove {\it real analyticity} of a function which satisfied some moment conditions, where there were examples satisfying the moment conditions which were not holomorphic. Although the results are expressed in terms of moment conditions, another formulation is to say that for certain entire functions $g(z)$, the closure of the  algebra generated by $z$ and $\overline z g(z)$ consists of real analytic functions with an infinite radius of convergence. This comes back full circle to the very early notion that $\overline z$ is "too much" to put into an algebra and get an interesting example. If there is a twist with the multiplication by $g(z)$, then real analyticity is maintained under closure. In this paper, further restrictions on the function $g$, and new techniques, allow us to  prove bounded point evaluation, from which closure in the $L^p$ space follows easily by normal families.   

Another way of looking at the result on Fr\'{e}chet algebras is to pose the following question. 
Consider algebras  of functions $\cal A$ on $\bf C$, with the compact open topology.
Are there examples  $\cal A$ with ${\cal O}({\bf C})\subsetneqq {\cal A}\subsetneqq C({\bf C})$, closed in the compact open topology, such that every function in $\cal A$ has some guaranteed smoothness more than continuity? As far as the author knows, this is a new question---at least for the case of real analyticity, this question may be new. 
There is some analogy with the idea of Douglas algebras. These are the algebras in $L^{\infty}(S^1)$ which are in between $H^{\infty}$ and $C(S^1)$. A very rich structure was discovered in the setting of Douglas algebras. Our research shows that there are some tractable questions about Fr\'{e}chet algebras between ${\cal O}(\bf C)$ and $C(\bf C)$ whose answers hint at a new theory. See \cite{Garnett} for background material on Douglas algebras.

After some background material, the basic result is proved in section 2 and 3. Various examples and generalizations are considered in later sections. The full potential of the new technique is not completely outlined here. In particular, Bergman spaces of real analytic functions on $\bf C^n$ and on certain Stein spaces, following the methods of \cite{La2} can be constructed in a fairly straightforward fashion.
There are two methods of proof of the estimates here. One of them uses well known integral formulas of Airapetyan and Henkin along with estimates which are  specific to the 1-dimensional extension problem (and do not hold in general).
With these techniques we can show that bounded point evaluation holds in $L^2$, for suitable $g(z)$ and with the Gaussian weight.
Then we will demonstrate a different technique based on a standard type of construction of analytic discs with boundary in the CR wedge.
The advantage of the second method is one gets control over the region of integration corresponding to the point evaluation for a given point. This allows us to prove some precise theorems about order of growth for a natural class of examples.

\section{Basic notions}
Let $X\subseteq \bf C$ be a discrete set. ${\cal A}_X $ is the set of all continuous $f(z), z\in \bf C$ such that for every $a\in X$ and every $r>0$, $f|_{|z-a|=r}$ extends holomorphically to  $|z-a|<r$.  
If the set $X$ contains vertices of arbitrarily large triangles which contain 0 roughly in the center, none of whose angles degenerate near $\infty$, then every function in ${\cal A}(X) $ is real analytic with infinite radius of convergence. The precise statement is in Theorems 1 and 2 of \cite{La2}. The basic ideas will be explained anew, because we modify them for the construction of Bergman spaces.
In case these conditions are met, then we further showed that 
 ${\cal A}_X$ is generated by $z$ and $\overline zg(z)$, where $g(z)$ is an entire function vanishing exactly on $X$, with only simple zeroes. 
 
A {\it Bergman space} means a closed subspace of $L^p$ of a domain (possibly with a weight) such that there is bounded point evaluation at every point in the domain, for functions in the space. This definition was given to the author by F. Haeslinger, who also posed the question of whether one could construct Bergman spaces of real analytic functions, based on the example ${\cal A}_X$. We are very grateful to Professor Haeslinger for suggesting the problem. 

\section{Construction of the algebras; the CR extension with $L^p$ estimates}
 There are always non-holomorphic functions in ${\cal A}_X$, for if $g(z)$ is an entire function whose zero set contains $X$, then $\overline z g(z)\in {\cal A}_X$. 
Given a discrete set $X\subseteq \bf C$, and a point $a\in X$, 
let $M(a)$ denote the associated half $CR$ manifold of extension of functions in ${\cal A}_a$.
$$M_a=\{(z,w): w=\frac{t}{z-a}+\overline a, t\ge |z-a|^2\}.$$ $M_a$ is a union of punctured analytic discs, and every function which is in ${\cal X}_a$ has a CR lift to $M_a$ (details in \cite{La2}).  Given any two points, $a,b\in X$, there is CR wedge extension from $M_a\cup M_b$. For any $z$, the extension is into the region between the rays of $M_a$ and $M_b$; in the exceptional case where $z$ is on the line joining $a$ and $b$, wedge extension does not (directly)  show any analytic continuation. If compact subsets of $M_a\cup M_b$ are taken, then local CR extension occurs. This is how the extensions will be used in this paper.

By a change of variables, we can consider the local model 
$M_1=\{(z_1,z_2): y_1=0, y_2\ge 0\}, M_2=\{(z_1,z_2):y_1\ge 0, y_2=0\}$. Given a continuous CR function on $M_1\cup M_2$,  we consider the analytic extension to the wedge $y_1>0, y_2>0$. The analytic extension can be realized by analytic discs with boundary in $M_1\cap M_2$. There are many choices; here we use dilations and translations of a fixed disc. Let $\Delta={\zeta: |\zeta |\le 1}$. Let $R$ be the upper half of a circle of radius 1, symmetric about the $y$-axis, with the bottom edge on the real axis. 

Let $\phi_1: \Delta\rightarrow \bf C$ be the unique conformal map with the property that $|\phi(|\zeta|<1)|=1$ on exactly on the upper half circle, $Im(\phi)= 0$ exactly on the lower half circle, and $Re(\phi(0))=0$. 
 Denote $\phi(0)=it$.  For any point $z=a+ib$ in the upper half-plane, the map $\phi_{a,b}(\zeta)=\frac{b}{t}(\phi(\zeta)+\frac{at}{b})$ also satisfies $Im(\phi)=0$ exactly on the lower half-circle, maps the upper half-circle to the upper half-plane, and $\phi_{a,b}(0)=a+ib$

Now consider a point $(z,w)=(a+ib,c+id)$ with $b>0, d>0$. The map $\Phi_{a,b,c,d}(\zeta)=(\phi_{a,b}(\zeta),\phi_{c,d}(-\zeta))$ provides an analytic disc such that 
\begin{enumerate}
\item $\Phi(0)=(z,w)$
\item $\Phi(e^{i\theta})\in M_1, \pi \le \theta \le 2\pi$. 
\item $\Phi(e^{i\theta})\in M_2, 0\le \theta \le \pi$.  
\end{enumerate}
We observe that as $z\rightarrow \pm 1$ in  $\overline\Delta$, 
$\phi_{a,b}'$ and $\phi_{c,d}'$ are comparable. Therefore, in Cauchy integrals over $\Phi(|\zeta|=1)$, $|dz|$ and $|dw|$ are comparable. 

For sharper estimates we need an improvement. Let $\epsilon >0$ be given, and $R_1>>\epsilon$.
Given a real horizontal or vertical dilation of the upper half circle, $\widetilde R $, there is a corresponding $t$, and discs $\widetilde\Phi$ can be constructed in the same fashion. We state as a proposition the fact we will use later. The proof is straightforward.
\begin{prop}
Given $\epsilon$, $R_2$, there exists and $R_2, \epsilon<R_2<R_1$, $C<1$ such that if $Z=(a+ib,c+id)$ satisfies $b>\epsilon, d>\epsilon, |a|>\epsilon, |b|>\epsilon, |Z|<R_1$, there exists a dilation of the  upper half disc, $\widetilde R$ such that the associated disc $\widetilde \Phi$ satisfies $|Re(\Phi_1)|>C\epsilon$, $|Re(\Phi_2)|>C\epsilon$ and $|\Phi|<R_2$.
\end{prop}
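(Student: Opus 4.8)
The structural point is that each of $\phi_{a,b}$ and $\phi_{c,d}$ is an affine image of the dilated half-disc $\widetilde R$: writing $\widetilde\phi(0)=it$ for its conformal centre, one has $\phi_{a,b}(\overline\Delta)=\frac{b}{t}\widetilde R+a$ and $\phi_{c,d}(\overline\Delta)=\frac{d}{t}\widetilde R+c$, while $\Phi(0)=(a+ib,c+id)=Z$. Consequently everything in the conclusion is governed by two ingredients only: the numbers $a,b,c,d$, which the hypotheses pin down by $|a|,|c|>\epsilon$, $b,d>\epsilon$, $|Z|<R_1$; and the position of the conformal centre $it$ inside $\widetilde R$ relative to the linear size of $\widetilde R$. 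So the plan is (1) to rewrite the three assertions as explicit inequalities in $a,b,c,d$, the width and height of $\widetilde R$, and $t$; (2) to prove the single analytic fact that makes everything work, namely that as $\widetilde R$ runs over the horizontal and vertical dilations of the unit half-disc the number $t$ stays in a fixed compact subinterval of the open interval between the flat edge and the curved arc, with $t$ comparable to the linear dimensions of $\widetilde R$; and (3) to pick the dilation.

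For (1): since $\widetilde R$ is symmetric about the imaginary axis, $Re\,\phi_{a,b}$ ranges over an interval centred at $a$ of half-length $\frac{b}{t}w$, where $w$ is the horizontal half-width of $\widetilde R$, and likewise $Re\,\phi_{c,d}$ over an interval centred at $c$ of half-length $\frac{d}{t}w$; hence $|Re\,\Phi_1|\ge|a|-\frac{b}{t}w$ and $|Re\,\Phi_2|\ge|c|-\frac{d}{t}w$. Similarly $|\Phi|\le\sqrt2\,\max(|\phi_{a,b}|,|\phi_{c,d}|)\le\sqrt2\left(R_1+\frac{R_1}{t}\,\mathrm{diam}\,\widetilde R\right)$. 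Thus, once (2) gives $t$ comparable, with absolute constants, to $w$ and to $\mathrm{diam}\,\widetilde R$, the last display fixes $R_2$ as a definite multiple of $R_1$, and the first two, after the dilation is chosen to make $\frac{b}{t}w$ and $\frac{d}{t}w$ as small as the family permits against $|a|$ and $|c|$, produce the constant $C<1$. The single dilation has to serve both coordinates at once, but this is harmless, since $b$ and $d$ enter symmetrically and are both bounded by $R_1$.

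Step (2) is the only real content, and it is a pure conformal-mapping assertion about the half-disc and its dilates. It can be checked by an explicit computation: for the round half-disc, composing $\sigma(\zeta)=(i-\zeta)/(1-i\zeta)$ (disc onto the upper half-plane, upper semicircle onto $[-1,1]$) with the inverse of $\Psi(z)=-(z+z^{-1})/2$ (half-disc onto the upper half-plane) sends $0$ to $i(\sqrt2-1)$, so $t=\sqrt2-1$ there, and Schwarz--Christoffel (elliptic-function) formulas give the analogous constants for the half-ellipses. Alternatively, and without computing, one argues by compactness: the conformal maps depend continuously on the dilation parameter, and at the two degenerate ends of the family the image region converges, in the Carath\'eodory sense, to a half-strip (respectively a half-plane), whose conformal centre again sits at a finite nonzero relative height -- for the half-strip $\{|x|<1,\, y>0\}$ one gets $t=\frac{2}{\pi}\log(1+\sqrt2)$ -- so $t$ is trapped in a compact subinterval uniformly over the family. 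Granting (2), step (3) is immediate. The only place where anything has to be verified is this uniformity in (2); the rest is bookkeeping with the affine maps, which is presumably why the proposition was described as straightforward.
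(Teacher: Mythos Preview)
The paper supplies no proof of this proposition; the sentence immediately preceding it says only that ``the proof is straightforward.'' So there is nothing in the paper to compare your argument against.

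That said, your plan has a real gap. Step~(2) asserts that the conformal height $t$ of $\widetilde R$ is comparable, with absolute constants, to both the half-width $w$ and to $\mathrm{diam}\,\widetilde R$. But the aspect ratio of $\widetilde R$ is a free parameter, so $t$ cannot be uniformly comparable to two lengths whose ratio is unbounded. Your own half-strip computation already exhibits the failure: as the height tends to infinity with $w=1$ fixed, $t$ tends to the finite number $y_0=\tfrac{2}{\pi}\log(1+\sqrt2)\approx 0.561$, so $t$ is certainly not comparable to the diameter in that regime.

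What your analysis in fact establishes is that $w/t$ is bounded \emph{below} across the entire family of dilations, with infimum $1/y_0\approx 1.78>1$. Feeding this back into step~(1), the bound $|\mathrm{Re}\,\Phi_1|\ge |a|-\tfrac{b}{t}w$ is at best $|a|-b/y_0$. Since the hypotheses permit $|a|$ as small as $\epsilon$ while $b$ may be as large as $R_1\gg\epsilon$ --- and already $b>\epsilon$ forces $b/y_0>\epsilon\ge|a|$ in the extremal case --- this lower bound is negative and delivers nothing. Geometrically, the base of the image half-ellipse $\tfrac{b}{t}\widetilde R+a$ is the interval $[\,a-\tfrac{bw}{t},\,a+\tfrac{bw}{t}\,]$, of half-length exceeding $b$, centred at $a$; it contains $0$ whenever $|a|<b\cdot\inf(w/t)$, and no dilation in the stated family prevents that. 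So step~(3) cannot be carried out as described. Either the intended hypotheses are tighter than what is written (the statement has several evident typos, and in the application in Proposition~3 the point $Z$ is taken small), or the construction of the disc must be modified so that its footprint on the real axis is not forced to be a symmetric interval about~$a$.
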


The point of this proposition is that the real parts of the coordinates determine the leaf in the Levi foliation. For better estimates for the point  0, we will want to stay away from the leaves which contain 0. A disc given by Proposition 1 gives an $L^p$ estimate, where the region of integration in each coordinate plane does not contain 0. 

Denote this analytic disc by $D_{a,b,c,d}$, or $D_p$ for short, where $p=(a+ib, c+id)$. Let $D_{1p}$ and $D_{2p}$ be the projections of $D_p$ onto the $z$ and $w$ planes, respectively;  the images of the components of $\Phi$.
Given a function $f$ which is continuous and CR on $M_1\cup M_2$, we can use the Cauchy formula $f(p)=\int_{\partial D_{1p}}f(z,g(z)) \frac{dz}{2\pi i}$, where $g(z)=\phi_{c,d}\circ (\phi_{a,b})^{-1}(z)$. In order to  estimate $|f(p)|$ we will first consider the integral over the circular part of $\partial D_{1p}$, then we use $w$ and use the same estimates over the circular part of $D_{2p}$. Since $|dz|$ and $|dw|$ are comparable, this gives one estimate. 
The details are in the next few paragraphs.

The $L^p$ estimates are derived from growth estimates for Hardy space functions. If $f\in H^p(S^1)$, then $$|f(z)|\le C \frac{||f||_p}{(1-|z|)^{1/p}}.$$
Suppose $f\in {\cal A}_a$; for convenience, set $a=0$. 
Let $\gamma(t), t\in [0,b]$ be a smooth curve such that $|\gamma(0)|=r_0$, $|\gamma(t)|<r_0-t$, and $r_0-|\gamma(t)|\equiv kt$ for some $k>1$. The radii could be also be decreasing to $r$ with the same growth rate.
Let $f_r$ denote the holomorphic extension of $f|_{|z|=r}$ to the disc $|z|<r$. Then  $|f_r(\gamma(t)|\le\frac{||f_r||_p}{(r-|\gamma(t)|)^{1/p}}$, and

$$\int_{0}^b|f_{r-t}(\gamma(t))|dt\le C\int_0^b\frac{||f_{r-t}||_p}{t^{1/p}}dt.$$

The following lemma gives what is needed to get the point evaluation estimates.

\begin{lemma}
Fix $p>2$. Let $\gamma(t)=(r(t), z(t))$, $0\le t\le b$, $r(t)>0$, $z(t)\in {\bf C}$,  $|z(t)|<r(t)$; $r(t)$ can be assumed to be monotonic in $t$.  Suppose that $\gamma(0)=(r_0, z_0), |z_0|=r_0$, and for some $0<K<1$, $0<Kt<r(t)-|z(t)|<\frac{1}{K}t$. If $f\in {\cal A}_0$, and if $F(\gamma(t))=f_{r(t)}(z(t)$, where $f_r$ is the holomorphic extension of $f$ on the disc of radius $r$, then 
$$\int_0^b |F(\gamma(t))|dt\le C\left(\int_{r_1}^{r_2}|f(z)|^p dA\right)^{1/p}.$$

Here $r_1$ and $r_2$ are the lowest and highest values of $r(t)$. 
\end{lemma}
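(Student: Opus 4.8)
The plan is to combine the quoted Hardy-space growth bound with one change of variables and a single application of H\"older's inequality; the hypothesis $p>2$ will enter at exactly one point. First, for $\rho>0$ set $h(\rho)=\left(\int_0^{2\pi}|f(\rho e^{i\theta})|^p\,d\theta\right)^{1/p}$. Since $f\in{\cal A}_0$, each $f_\rho$ is holomorphic on $|z|<\rho$ and continuous up to the boundary, hence lies in the Hardy space of the disc of radius $\rho$ with $\|f_\rho\|_{H^p}$ a fixed multiple of $h(\rho)$. Rescaling the inequality $|g(z)|\le C\|g\|_p(1-|z|)^{-1/p}$ from $\Delta$ to the disc of radius $\rho$ gives $|f_\rho(z)|\le C\rho^{1/p}h(\rho)(\rho-|z|)^{-1/p}$. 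Evaluating at $\rho=r(t)$, $z=z(t)$ and using $r(t)-|z(t)|>Kt$ together with $r(t)\le r_2$, one gets the pointwise bound $|F(\gamma(t))|\le C\,h(r(t))\,t^{-1/p}$ for $0<t\le b$, with $C$ depending only on $p$, $K$ and $r_2$.

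Next, integrate this bound and apply H\"older's inequality with exponents $p$ and $q=p/(p-1)$:
$$\int_0^b|F(\gamma(t))|\,dt\le C\left(\int_0^b h(r(t))^p\,dt\right)^{1/p}\left(\int_0^b t^{-q/p}\,dt\right)^{1/q}.$$
The second factor is finite precisely because $p>2$ forces $q/p=1/(p-1)<1$; this is the only place the hypothesis is used. For the first factor, use that $r$ is monotonic (and, for the curves produced by Proposition 1, that $|r'|$ is bounded below by a positive constant) to substitute $\rho=r(t)$ and bound $\int_0^b h(r(t))^p\,dt$ by $C\int_{r_1}^{r_2}h(\rho)^p\,d\rho$. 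Finally, in polar coordinates $\int_{r_1\le|z|\le r_2}|f|^p\,dA=\int_{r_1}^{r_2}h(\rho)^p\rho\,d\rho\ge r_1\int_{r_1}^{r_2}h(\rho)^p\,d\rho$, so the right-hand side above is dominated by $C\left(\int_{r_1\le|z|\le r_2}|f|^p\,dA\right)^{1/p}$, which is the assertion (all constants being absorbed into $C$).

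The main point to watch is the H\"older balance in the middle step: the singularity $t^{-1/p}$ produced by the Hardy growth bound must be paired with an $L^q([0,b])$ weight in $t$, and $t^{-q/p}$ is integrable near $0$ exactly when $p>2$ — this is what pins the lemma to $p>2$ rather than $p\ge 1$. A secondary technical point is that the stated inequality really does require $r(t)$ to vary at a definite rate (it is false, for instance, if $r$ is constant on a subinterval, since the right-hand side then collapses), so in the applications one only invokes it for curves — those arising from the analytic discs of Proposition 1 — along which $|r'(t)|\asymp 1$, which makes the change of variables above legitimate.
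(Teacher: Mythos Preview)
Your proof is correct and follows essentially the same strategy as the paper's: a pointwise Hardy-space growth bound, then H\"older's inequality in $t$, then a change of variables to the area integral over the annulus. The only cosmetic difference is that you apply the $H^p$ growth bound directly (producing the singularity $t^{-1/p}$), whereas the paper applies the $H^2$ bound first (producing $t^{-1/2}$) and then uses $\|f_r\|_2\le C\|f_r\|_p$ for $p\ge 2$; both routes force exactly the same threshold $p>2$ in the H\"older step, and your explicit remark that the final change of variables needs $|r'|\asymp 1$ is a point the paper leaves implicit.
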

\begin{proof}
Let $q$ be the conjugate exponent; $\frac{1}{p}+\frac{1}{q}=1$. 
First apply the growth estimate with $p=2$ to get 
$$\int_0^b |F(\gamma(t))|dt\le C \int_0^b\frac{||f_{r(t)}||_2}{t^{\frac{1}{2}}}$$
$$\le C\int_0^b\frac{||f_{r(t)}||_p}{t^{\frac{1}{2}}}\le C\left(\int_0^b\frac{1}{t^\frac{q}{2}}dt\right)^{\frac{1}{q}}\left(\int_0^b\int_{|z|=r(t)}|f(z)|^p|dz|dt\right)^{\frac{1}{p}}\le$$
$$C\left(\int_{r_1\le |z|\le r_2}|f(z)|^pdA\right)^{\frac{1}{p}}.$$

\end{proof}

This  lemma is applied for estimating the contribution of a Cauchy integral on the boundary of some $\Phi_{a,b,c,d}$, on the part of the curve near the edge. By construction, the approach at the edge is transverse, which allows the estimate of the lemma to be applied.
(The only difficulty with estimation is near the edge.)

Next we apply this estimate to get $L^p$ bounds inside the wedge extension of the $F$ associated to $f\in{\cal_A}_{01}$. 
For $z$ with $Im(z)\ne 0$, we have extension on a CR wedge consisting locally of half spaces in $Im(zw)=0$, $Im(zw-z-w)=0$.
Apply the change of coordinates $\Psi(z,w)=(\zeta,\tau)=(zw, zw-z-w)$.
The Jacobian of this map is $z-w$, which is non-zero in a neighborhood of any point $(z,\overline z)$ with $z$ not real. Pick such a $P=(z,\overline z)$ and let $B_1$ be a small ball centered at $P$ so that $\Psi$ is 1-1 on $B_1$ with $|det(\Psi')|$ bounded away from zero. By a translation, and multiplying components of the map by $-1$ if necessary, we may assume that 
$\Phi(z,\overline z)=(0,0)$, and that the image of  our CR wedge is locally in $\{(z',w'):Im(z')\ge 0, Im (w')=0\}\cup \{(z',w'): Im(z')=0, Im(w')\ge 0\}$.
 Let $B_2=B(0,\epsilon)$ be  a ball in the $(z',w')$ coordinates such that $U=B_2\cap \{(z',w'): Im (z')>0, Im(w')>0\}$ is contained in the region of of wedge extension of $\Psi(B_1)\cap(\{z',w'):Im(z')=0, Im(w')\ge 0\}\cup \{(z',w'): Im(z')\ge 0, Im(w')=0\})$.
 Fix a $\delta <<1$. 
 By shrinking $\epsilon$ if necessary,  for any point $(z,w)\in U$, there is a disc $\Phi_{a,b,c,d}$ such that
 $\Phi(|\zeta|=1)\subseteq \frac{1}{2}K.$
Using these discs $\Phi_{a,b,c,d}$ whose images lie in $\Psi(B)$, 
for all points in $U$.
The comparability of $Im(\phi_1)(e^{i\theta})$ and $Re(\phi_2)(e^{i\theta})$ for $\theta$ close to 0 or $\pi$, in the range $[0,\pi]$, and the corresponding statement for $Re(\phi_1)$ and $Im(\phi_2)$ on the lower half of the circle, allow us to apply the lemma, giving the following proposition.

\begin{prop}
Given Given a point $(z,\overline z)$ with $Re z\ne 0$, there  exists a balls $B_1\subseteq B_2$ containing $(z,\overline z)$ such that:
\begin{enumerate}
\item In the local CR extension from $B_2\cap M_0\cap M_1$, every point in $M_{01}\cap B_1$ is contained in a disc $\Phi_{\cdot}$ whose boundary is contained in $B_1$. 
\item For any point $(u,v)\in B_2\cap M_{01}$, $v\ne \overline u$, there exists a constant $C_{u,v}$, independent of $f$, and $R>0$ independent of $f$, such that $$|f(u,v)|\le C_{u,v}\left(\int_{|z|\le R } |f(z)|^p dA\right)^{\frac{1}{p}}.$$
The constants $C_{u,v}$ are bounded on any compact set of $B$ which does not intersect $\{(z,w):(Im(zw)=0\}\cup \{(z,w):Im(zw-z-w)=0\}$
\end{enumerate}
\end{prop}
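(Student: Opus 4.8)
The plan is to straighten the CR wedge by the map $\Psi(z,w)=(zw,\ zw-z-w)$, set up the two nested balls, and then deduce the point evaluation from two applications of Lemma~1, one in each coordinate projection of a covering analytic disc. First I would fix the given point $P_{0}=(z,\overline z)$. Since the Jacobian of $\Psi$ is $z-w$, which is nonzero at $P_{0}$, choose a ball $B_{1}\ni P_{0}$ on which $\Psi$ is a biholomorphism with $|\det\Psi'|$ bounded above and below; after a translation and, if necessary, a sign change of a component (as in the paragraph preceding the proposition) one may assume $\Psi(P_{0})=0$ and that $\Psi$ carries $M_{0}\cup M_{1}$ into the two standard coordinate half-spaces, with the wedge extension going into $\{\mathrm{Im}\,z'>0,\ \mathrm{Im}\,w'>0\}$. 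Let $B_{2}\supseteq B_{1}$ be a slightly larger ball on which the local CR extension from $B_{2}\cap(M_{0}\cup M_{1})$ is valid. Item~(1) is then the covering statement already indicated before the proposition: after shrinking $B_{1}$, every point of the wedge lying in $B_{1}$ is joined to $M_{0}\cup M_{1}$ by one of the discs $\Phi_{a,b,c,d}$ — indeed by one of the dilated discs furnished by Proposition~1, which keeps the boundary circle inside $B_{1}$ and, crucially for the sharper estimates near $0$, off the Levi leaves through $0$. The size control needed to keep these boundaries inside $B_{1}$ is exactly what Proposition~1 supplies.

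For item~(2), fix $(u,v)\in B_{2}\cap M_{01}$ with $v\neq\overline u$, so $(u,v)$ is in the open wedge, where the extension $f$ of the CR lift is holomorphic and, the lift being continuous up to $M_{0}\cup M_{1}$, the Cauchy formula over the covering disc is legitimate. Let $D_{P}$, $P=(u,v)$, be the disc from Proposition~1, with projections $D_{1P},D_{2P}$ and $\partial D_{P}=\Phi(\partial\Delta)$, and split $\partial\Delta$ into the arc mapping into $M_{0}$ and the arc mapping into $M_{1}$; each meets the common edge only near the corners $\zeta=\pm1$. Starting from $f(u,v)=\frac{1}{2\pi i}\int_{\partial D_{1P}}\frac{f(z,g(z))}{z-u}\,dz$ with $g=\phi_{c,d}\circ\phi_{a,b}^{-1}$, I would treat the contour in pieces. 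Each near-corner piece is pulled back to the relevant half CR manifold, where the lift reads $f_{r(t)}(z(t))$ along a parametrization $\gamma(t)=(r(t),z(t))$ with $Kt<r(t)-|z(t)|<K^{-1}t$ — available because, by construction, the approach to the edge is transverse; for whichever piece this transversality is manifest only in the second coordinate, one first changes variables $z=z(w)$ and uses the comparability $|dz|\sim|dw|$ on $\partial D_{P}$ to convert it into an integral over the corresponding arc of $\partial D_{2P}$. Lemma~1 then bounds every such piece by $C\big(\int_{r_{1}\le|z|\le r_{2}}|f|^{p}\,dA\big)^{1/p}$. The remaining, off-edge portions are estimated by the same device with $r-|z|$ bounded below, where only the crude $H^{p}$ growth bound is needed together with a Fubini argument over the interval of radii $r$ sweeps as the disc, being transverse to the Levi foliation, crosses leaves. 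Summing the finitely many pieces, and noting that both $M_{0}$ and $M_{1}$ contribute integrals of $|f|^{p}$ over discs in the single plane $\mathbf{C}_{z}$ whose union lies in $\{|z|\le R\}$, one obtains $|f(u,v)|\le C_{u,v}\big(\int_{|z|\le R}|f(z)|^{p}\,dA\big)^{1/p}$ with $R$ depending only on $B_{2}$.

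For the uniformity clause I would track the three sources of $C_{u,v}$: the factor $\sup_{\partial D_{1P}}|z-u|^{-1}$, i.e.\ the reciprocal distance from $u$ to $\partial D_{1P}$; the constant $K$ of Lemma~1; and the arclengths and the radii $r_{1},r_{2}$. Each is controlled by the geometry of $D_{P}$, which depends continuously on $P$ and degenerates only as $P$ approaches the edge, that is, as $\mathrm{Im}(zw)\to0$ or $\mathrm{Im}(zw-z-w)\to0$; hence $C_{u,v}$ stays bounded on any compact subset of $B_{2}$ disjoint from $\{\mathrm{Im}(zw)=0\}\cup\{\mathrm{Im}(zw-z-w)=0\}$. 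I expect the main obstacle to be the near-corner analysis: making the hypotheses of Lemma~1 hold \emph{uniformly} in $P$ requires a quantitative, $P$-uniform version of the boundary comparability of $\phi_{a,b}$ and $\phi_{c,d}$ at $\zeta=\pm1$ noted above — the folding of $\partial\Delta$ onto the edge — together with the verification that along $\partial D_{P}$ the quantity $r-|z|$ is comparable to arclength measured from the corner, with constants uniform on compacta avoiding the bad set. Granting that, the Cauchy formula, the Fubini step, and the $H^{p}$ growth bound are routine.
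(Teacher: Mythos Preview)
Your proposal is correct and follows essentially the same route as the paper: straighten the wedge via $\Psi(z,w)=(zw,zw-z-w)$, set up the nested balls, cover wedge points by the discs $\Phi_{a,b,c,d}$ of Proposition~1, and then estimate the Cauchy integral by splitting into near-corner pieces (handled by Lemma~1 via the transverse approach and the comparability of $|dz|$ and $|dw|$) and off-edge pieces. Your write-up is in fact more detailed than the paper's, which after the preceding setup simply remarks that the holomorphic extension is the CR wedge extension and that the bounds on $C_{u,v}$ follow from bounds on the maps $\Phi$; one minor quibble is that the Cauchy kernel over $\partial D_{1P}$ is not literally $1/(z-u)$ but the pullback kernel $(\phi_{a,b}^{-1})'(z)/\phi_{a,b}^{-1}(z)$, though this does not affect the estimation.
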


The holomorphic extension to $B$ is simply what is given by the CR wedge extension. The bounds on the constants $C_{u,v}$ follow from bounds on the maps $\Phi$. 

The estimates blow up near the CR wedge. In the original paper, if the set $X$ associated to ${\cal A}_X$ contained vertices of arbitrary large triangles, one could prove real analyticity.
To get bounded point evaluation, we need some overlap: the analytic continuation from a wedge $M_1\cup M_2$ will cover a third CR half space $M_3$ which is in the region of wedge extension. We need at least 5 local CR wedges to get estimates.
Here is a theorem which gives $L^p$ estimates for point evaluation in a neighborhood of a suitable point. 

\begin{theorem} Let $X\subseteq \bf C$ be a discrete set, and let ${\cal A}_X$ be the associated algebra. 
Let $z_0\in \bf C$ be a point with the properties listed below. Then for some $\epsilon >0$, there exist continuously varying constants $C_z$ for $|z-z_0|<\epsilon$, and an $R>0$, such that for all $f\in {\cal A}_X$, $|f(z)|<C_z\left(\int_{|z-z_0|<R}|f(z)|^pdA\right)^{\frac{1}{p}}$.

There are $5$ points $a_1,\dots, a_5, a_i\in X$ such that
in the fiber over $z$ of $\cup M_i$, every ray of an $M_i$ is contained between two other rays on the side with angle less than $\pi$. 
\end{theorem}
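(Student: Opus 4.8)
The plan is to reduce the global estimate to a finite patchwork of the local estimates already established in Proposition 2 (the two-wedge version) by using the geometric hypothesis on the five points $a_1,\dots,a_5$ to guarantee that the fiber over each $z$ near $z_0$ is covered by regions of wedge extension that avoid the bad sets. First I would fix $z_0$ and look at the fiber $\pi^{-1}(z)$ inside $\bigcup_{i=1}^5 M_{a_i}$: each $M_{a_i}$ contributes a ray (the image of the punctured disc structure of $M_{a_i}$ over $z$), and the hypothesis says every such ray lies strictly between two others on the side of angle less than $\pi$. For each pair $(a_i,a_j)$ whose rays are ``adjacent'' in this circular order, CR wedge extension from $M_{a_i}\cup M_{a_j}$ fills the open sector between the two rays; since the five rays are cyclically interleaved, the union of these sectors covers a full punctured neighborhood of the point $(z,\overline z g(z))$ in the fiber — in particular it covers a third half-space $M_{a_k}$ sitting in the interior of one of the sectors, which is exactly the ``overlap'' phenomenon described before the theorem. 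This is what lets the estimate survive: a point that is near the wedge boundary of one pair is in the interior of the extension region of another pair.

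Next I would make the estimate itself. For a fixed pair $(a_i,a_j)$ I apply the change of coordinates $\Psi$ and Proposition 2: for points $(u,v)$ in $B_2\cap M_{ij}$ with $v\ne\overline u$ one gets $|f(u,v)|\le C_{u,v}\bigl(\int_{|z|\le R}|f|^p\,dA\bigr)^{1/p}$, with $C_{u,v}$ bounded on compacta avoiding the two bad hypersurfaces. The key point is that the exceptional set for the pair $(a_i,a_j)$ is contained in the two lines/hypersurfaces through $a_i,a_j$, and by the interleaving hypothesis, for any $z$ near $z_0$ the fiber point $(z,\overline z g(z))$ fails to lie on the exceptional set of \emph{at least one} of the adjacent pairs — equivalently, at least one of the finitely many $C_{u,v}$'s stays finite and varies continuously in $z$. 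So I define $C_z$ to be (a constant times) the minimum over the finitely many pairs of the relevant $C_{u,v}$, restricted to those pairs for which $z$ is a good point; continuity of $C_z$ in $z$ follows from the continuity statement in Proposition 2 together with the fact that the set of $z$ for which a given pair is ``good'' is open. Taking a single $R$ that works for all five points (the maximum of the finitely many radii) and a single $\epsilon$ so that $|z-z_0|<\epsilon$ keeps all the relevant fiber points in the respective balls $B_1$ completes the estimate.

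The remaining ingredients are bookkeeping. I need to check that the geometric hypothesis is stable under perturbation of $z$ — i.e. that the cyclic order of the five rays over $z$ does not change for $z$ in a small neighborhood of $z_0$, and that no ray degenerates; this is where the non-degeneracy of the triangle angles (``none of whose angles degenerate near $\infty$'', in the language of the introduction) is used, now transplanted to the five-point configuration. I also need that the discs $\Phi$ supplied by Proposition 1 can be chosen with boundary avoiding the leaves through $0$, so that the region of integration in the Cauchy formula genuinely sits in a fixed annulus $\{|z|\le R\}$ and the Bergman-type integral on the right is over a fixed set independent of $f$.

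The main obstacle I expect is precisely the covering/overlap step: verifying that five half-spaces in the interleaved configuration really do produce wedge-extension sectors whose union, pair by pair, covers a punctured neighborhood of the fiber point \emph{and} that every $z$ near $z_0$ admits at least one adjacent pair for which that $z$ is not exceptional. Three rays are not enough (a point can sit on the line of every pair simultaneously if the rays are collinear, and generically three sectors leave a gap); the count of five is chosen so that as the fiber point moves, it is always interior to some sector while staying off the two bad hypersurfaces of that sector's pair. Making this combinatorial-geometric claim precise — and uniform for $|z-z_0|<\epsilon$ — is the crux; once it is in hand, the analytic estimate is just Proposition 2 applied finitely many times and a minimum over the resulting constants.
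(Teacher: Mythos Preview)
Your plan correctly identifies the role of the five-point hypothesis (each ray of an $M_{a_i}$ sits in the interior of the sector spanned by two neighbours, so the wedge-extension regions genuinely overlap), but the execution has a real gap. You propose to bound $f(z)=F(z,\overline z)$ directly by Proposition~2, choosing for each $z$ a pair $(a_i,a_j)$ for which the fiber point is ``good''. This cannot work: the diagonal $w=\overline z$ is the common \emph{edge} of all the half-manifolds $M_{a_i}$ (set $t=|z-a|^2$ in the definition of $M_a$ and you get $w=\overline z$), so $(z,\overline z)$ lies on both exceptional hypersurfaces $\{Im(zw)=0\}$ and $\{Im(zw-z-w)=0\}$ in the local model, for \emph{every} pair. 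Proposition~2 explicitly excludes $v=\overline u$, and the paragraph preceding the theorem warns that the constants $C_{u,v}$ blow up near the CR wedge. No choice of pair gives a finite $C_{z,\overline z}$, and taking a minimum over pairs does not help. (Incidentally, the fiber point you want is $(z,\overline z)$, not $(z,\overline z g(z))$; the latter is a generator of the algebra, not the lift of $f$.)

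The paper's proof inserts one additional step that bypasses this. Rather than estimate $F$ at $(0,0)$, it takes a small circle $\gamma$ in the $w$-fiber over $z_0=0$ and estimates $F$ at each point of $\gamma$. Points of $\gamma$ are off the diagonal, so Proposition~2 applies; the five-ray hypothesis is precisely what guarantees that every $q\in\gamma$ --- including the points where $\gamma$ meets the ray of some $M_{a_i}$ --- is interior to the wedge-extension region of some \emph{other} pair, with constants bounded uniformly along $\gamma$. A single Cauchy integral in $w$ over $\gamma$ then recovers $|f(0)|=|F(0,0)|$, and continuity of all parameters gives the estimate for $|z-z_0|<\epsilon$. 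Your combinatorial covering argument is exactly the right ingredient, but it must be applied to the circle $\gamma$ in the fiber, not to the diagonal point itself.
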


\begin{proof} Take $z$ to be 0. Consider a circle $\gamma =\{(z,w):w=e^{i\alpha z}, |z|=\eta\}$, where $\alpha$ is chosen so that the intersections with the $M_{a_i}$'s are isolated points. Then for small enough $\eta$ (which may require shrinking $\epsilon$ and $\delta$, we can find an $L^p$ estimate on every point of $\gamma$. On points on or close enough to a particular $M_{a_i}$, we can use the wedge extension estimates from two other CR manifolds, as in the proposition.
By Cauchy's formula, this gives the estimate for $z=0$; for nearby points it follows from continuity of all parameters involved.
\end{proof}

It is a straightforward matter to use normal families to show that  functions in $B_{p,X}$ are in ${\cal A}_X$. Since this is an important fact, we state it explicitly
\begin{theorem}
$B_{p,X}\subseteq {\cal A}_X$.
\end{theorem}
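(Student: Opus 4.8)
The plan is to combine the point evaluation estimate of the preceding Theorem with a normal families argument. Let $f \in B_{p,X}$, so that $f$ is the $L^p$-limit (in the weighted space, if a weight is present) of a sequence $f_n$ drawn from a dense subset of $\mathcal{A}_X$ on which bounded point evaluation holds; alternatively, $f$ is simply an element of the closed subspace $B_{p,X} \subseteq L^p$, which by definition of a Bergman space admits bounded point evaluation. First I would fix a point $a \in X$ and a radius $r > 0$, and show that $f|_{|z-a|=r}$ extends holomorphically to the disc $|z-a| < r$; since $a$ and $r$ are arbitrary this is exactly the defining property of $\mathcal{A}_X$. For this, pick any approximating sequence $f_n \to f$ in $L^p$; each $f_n \in \mathcal{A}_X$, so each extends holomorphically inside $|z-a|<r$. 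Denote these extensions $F_n$.

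The key step is to upgrade $L^p$-convergence to local uniform convergence of the extensions $F_n$ on compact subsets of $|z-a| < r$. Here I would invoke the point evaluation estimate just established: for every interior point $z$ near the relevant region there is a constant $C_z$, continuously varying, and an $R>0$ so that $|g(z)| \le C_z \big(\int_{|z-z_0|<R} |g|^p \, dA\big)^{1/p}$ for all $g \in \mathcal{A}_X$. Applying this to $g = f_n - f_m$ shows $(F_n)$ is uniformly Cauchy on compact subsets of any region where such estimates are available; since $C_z$ is locally bounded away from the exceptional CR-wedge loci, and since those loci can be avoided by choosing the configuration of five points $a_1,\dots,a_5$ appropriately near each interior point, we obtain local uniform convergence of $F_n$ to a holomorphic limit $F$. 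The boundary values of $F$ on $|z-a|=r$ agree with $f$ (the $L^p$ limit of the $f_n$, whose restrictions to the circle converge appropriately), so $F$ is the desired holomorphic extension of $f|_{|z-a|=r}$.

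Finally I would note that $f$ is continuous: on each circle $|z-a|=r$ it coincides with the boundary trace of a holomorphic function, and these traces fit together continuously as $r$ and $a$ vary, using again the local uniform control on the extensions. Hence $f$ satisfies every defining condition of $\mathcal{A}_X$, giving $B_{p,X} \subseteq \mathcal{A}_X$.

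The main obstacle I anticipate is bookkeeping rather than conceptual: one must verify that the point evaluation estimate of the previous Theorem, which is stated for a single point $z_0$ with a favourable five-ray configuration, can be deployed at \emph{enough} interior points to cover compact subsets of every disc $|z-a|<r$, and that the exceptional sets (where the constants $C_z$ blow up) do not obstruct passage to the limit. Since the exceptional sets are the CR-wedge loci $\{Im(zw)=0\}\cup\{Im(zw-z-w)=0\}$ pulled back under the disc constructions — real-analytic subvarieties of positive codimension — one can always perturb the base point or the choice of the $a_i \in X$ to avoid them, and a covering/patching argument then propagates holomorphicity of the limit across these thin sets by the identity theorem. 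Making this patching precise, while routine, is where the real work lies.
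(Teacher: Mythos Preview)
Your approach is essentially the paper's: the paper's entire argument is the single remark that bounded point evaluation plus normal families makes the inclusion ``straightforward,'' and you have supplied exactly those details. Your worry about patching across exceptional CR-wedge loci is unnecessary, however --- Theorem~1 already delivers a clean bound $|g(z)|\le C_z\|g\|_p$ on a full neighbourhood of each $z_0$ (the exceptional sets were absorbed inside its proof via the auxiliary circle $\gamma$ and Cauchy's formula), so $f_n\to f$ locally uniformly on all of $\mathbf{C}$, and the holomorphic extensions on each disc $|z-a|<r$ then converge by the ordinary Cauchy integral over the boundary circle, with no patching required.
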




\section{Estimates derived from Airapetyan-Henkin formulas}
There is a different way of getting estimates which includes the case $p=2$. The proof uses integral formulas due to Airapetyan and Henkin for wedge extension from a pair of Levi-flats. The exponent is improved, but the weak mean-value property using integration over an annulus cannot be proved with this method. 
Given $M_{a_1}, M_{a_2}$ and some point $p$ not on the line determined by $a_1$ and $a_2$, there is wedge extension on one side. Locally, this is biholomorphically equivalent to the case $M_1=\{(z,w):Re(w)=0, Re(z)>0\}, M_2=\{Re(z)=0, Re(w)>0\}$. 
Intersecting $M_1$ and $M_2$ with a small ball centered at $p$, then for a point $(z,w)$ close enough to 0, with $Re(z)>0, Re(w)>0$, then for any function $f$ which is $CR$ on $M_{a_1}\cup M_{a_2}$ (thus automatically extending to a holomorphic function on a region including $(z,w)$ for $(z,w)$ close enough to 0), 
$$ f(z,w)=\int_{(M_{a_1}\cap M_{a_2}\times[0,1]}K_1(\zeta,\tau,t,z,w)f(\zeta,\tau)dm(z,w)dt $$ $$+\int_{M_{a_1}}K_2(\zeta,\tau,z,w)f(\zeta,\tau)dm_1(\zeta,\tau)+\int_{M_{a_2}}K_3(\zeta,\tau,z,w)f(\zeta,\tau)dm_2(\zeta.\tau).$$
Implicit in these formulas is a cut-off, so the integrations are taken over compact sets. 
The kernel $K_1$ has $(z-\zeta)(w-\tau)$ in the denominator, which is non-vanishing. The kernel $K_2$ has $(w-\tau)$ in the denominator, and $K_3$  has $(z-\zeta)$ in the denominator; in both cases, the fraction is non-bounded and vanishing. We mean here boundedness with the point $(z,w)$ fixed. 

Now we derive bounded point evaluation for $\le p<\infty$. Estimates of the following type are contained in
\cite{La3} and depend on the 1-dimensional extension property. Most likely they do not hold in general. 
\begin{lemma} given $p\in M_{a}\cap (w=Im (z))$, then for sufficiently small balls $B_r(p)$,
$$\int_{B_p(r)\cap M_{a}}|f^2|\le K\int_{B_p(r)\cap (w=\overline z)}|f|^2.$$
\end{lemma}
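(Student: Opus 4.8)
The plan is to reduce the estimate on the CR manifold $M_a$ to a family of one-dimensional Hardy space estimates, exactly in the spirit of Lemma 2, and then integrate against the extra parameter that sweeps out $M_a$. First I would fix the standard local model: near the point $p \in M_a \cap \{w = \overline z\}$, the manifold $M_a$ is a union of punctured analytic discs, the leaves of its Levi foliation, each leaf being (a piece of) the graph $w = \frac{t}{z-a} + \overline a$ over a circle $|z-a| = \rho(t)$, and the reference totally real manifold $\{w = \overline z\}$ corresponds to the single value $t = |z-a|^2$ sitting at the ``edge'' of this family. So on $B_p(r) \cap M_a$ one has coordinates $(t, \theta)$ (or $(t,z)$ with $|z-a|$ determined by $t$), and $B_p(r) \cap \{w = \overline z\}$ is the slice $t = |z-a|^2$. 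The function $f$ restricted to the leaf at parameter $t$ is the boundary value of the holomorphic extension $f_{\rho(t)}$ into the disc $|z-a| < \rho(t)$, so $|f|$ on the leaf is $|f_{\rho(t)}|$ evaluated on that circle, while its value on the edge slice is $f_{\rho_0}$ evaluated on the circle $|z-a| = \rho_0$ with $\rho_0^2 = $ the edge value.

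The key step is the one-variable submean / Hardy estimate: for $g$ holomorphic on $|z-a| < \rho$ and continuous up to the boundary, the $L^2$ norm of $g$ on an interior circle $|z-a| = \rho'$ (or on a thin annulus of such circles, as produced by the foliation of $M_a$) is bounded by the $L^2$ norm of $g$ on the boundary circle $|z-a| = \rho$, with a constant depending only on $\rho'/\rho$ — this is just $\|g\|_{H^2}$ monotonicity of circular means, $\int_{|z-a|=\rho'} |g|^2 \le \int_{|z-a|=\rho} |g|^2$. Applying this with $g = f_{\rho(t)}$: the slice of $\int_{B_p(r)\cap M_a} |f|^2$ at parameter $t$ is controlled by the circular $L^2$ norm of $f_{\rho(t)}$ at radius $\rho(t)$, which by subharmonicity of $|f_{\rho(t)}|^2$ is controlled by its value on the \emph{edge} circle — but here one must be careful, since the edge circle has radius $\rho_0$ which is the \emph{largest} radius in the family, i.e.\ the edge is the outer boundary, so in fact $f_{\rho(t)}$ is the restriction of $f_{\rho_0}$'s relatives and the comparison goes the favorable way. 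Concretely, since all the circles $|z-a| = \rho(t)$ for the leaves meeting $B_p(r)$ are interior to or equal to the edge circle, $\int_{|z-a|=\rho(t)} |f|^2 |dz| \le C \int_{|z-a| = \rho_0} |f|^2 |dz|$, and then integrating $dt$ over the (bounded) range of $t$ cutting out $B_p(r)$ gives the factor absorbed into $K$; the right-hand side is exactly (a constant times) $\int_{B_p(r) \cap \{w = \overline z\}} |f|^2$ once one checks the Jacobian of the $(t,\theta) \mapsto$ arclength-on-the-edge change of variables is bounded above and below on $B_p(r)$ for $r$ small.

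The main obstacle is the interchange of the roles of ``edge'' and ``interior'': one has to verify that in the local picture near $p$ the edge slice $\{w = \overline z\}$ really is the outermost circle of the foliated family (so that subharmonicity bounds the leaf integrals by the edge integral and not the other way around), and that the ball $B_p(r)$ only picks up leaves whose radii stay within a bounded ratio of $\rho_0$ — this is where the transversality of the approach to the edge, noted after Lemma 2, and the smallness of $r$ enter. A secondary technical point is controlling the measures $dm$ on $M_a$ versus surface measure on $\{w = \overline z\}$: both are comparable to $|dz|\,dt$ near the edge, with constants depending only on the geometry of the leaves, so this contributes only to $K$. I do not expect either point to be genuinely hard — the estimate is really the statement that a thin shell of interior circles of a holomorphic function has $L^2$ mass bounded by that of the outer boundary circle — but getting the geometry of the foliation lined up correctly with the edge is the step that requires care, and it is the only place the one-dimensional extension property is used.
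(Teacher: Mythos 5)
Your argument is correct and is essentially the paper's own proof: the paper disposes of this lemma in two sentences (``integrals on slices parallel to the totally real plane are bounded by the integral on the totally real plane,'' an application of standard $H^p$ theory on the disc, with details deferred to \cite{La3}), and your leaf-by-leaf decomposition of $M_a$ together with the $H^2$ monotonicity of circular means, followed by integration in the transverse parameter $t$, is exactly that argument written out. Your concern about the orientation of the edge resolves the favorable way, as you note: the constraint $t\ge |z-a|^2$ makes the boundary circle $|z-a|=\sqrt{t}$ the outermost circle of each leaf, and it lies precisely on $\{w=\overline z\}$.
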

\begin{proof} Integrals on slices parallel to the totally real plane are bounded by the integral on the totally real plane. This is an application of standard $H^p$ theory on the disc. Details in \cite{La3}
\end{proof}

We conclude that there is bounded point evaluation in $L^p({\bf C}, e^{-|z|^2}), 1\le p<\infty.$

\section{An explicit example, with growth estimates}
Let $X={\bf Z}\cup (\omega {\bf Z})\cup (\omega^2 {\bf Z})$, where $\omega^3=1, \omega \ne 1$. Take $g(z)=\frac{\sin z \sin (\omega z)\sin(\omega^2 z)}{z^2}$. From the order of $g$ we see that 
$$B_X=\{f\in {\cal A}_X: \int_{\bf C} |f(z)|^2 e^{-|z|^2}dA\}$$ is a Bergman space of real analytic functions which contains non-holomorphic functions. 

The first step is not necessary for the coefficient estimates but seems independently interesting. 
We prove in Proposition 3 a kind of weak maximum  principle. 
\begin{prop}
Let $L\subseteq\bf C$ be a regular $n$-gon centered at $0$, $n\ge 5$,  with vertices $p_1,\dots,p_n$. 
Denote by $Y$ the set of vertices of $L$.  There exists $\delta>0$, and $0<r_1<r_2$, $C>0$  such that if $f\in {\cal A}_Y$, and $|z|<\delta$, then 
$$|f(z)|\le C\left(\int_{r_1<|z|<r_1}|f(z)|^pdA(z)\right)^{\frac{1}{p}}.$$
\end{prop}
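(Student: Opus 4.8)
\noindent\emph{Proof strategy.} The plan is to recognise Proposition 3 as Theorem 1 applied at the centre of $L$, and then to refine the disc appearing in the conclusion of Theorem 1 to an annulus by re-reading its proof. First I would verify the hypothesis of Theorem 1 at $z_0=0$. Normalise so that $L$ is centred at the origin and write $R_0$ for its circumradius, so $p_k=R_0e^{2\pi ik/n}$. For any $z$, the half-manifold $M_{a}$ meets the fibre $\{z\}\times{\bf C}$ in the ray issuing from $(z,\overline z)$ in the direction $1/(z-a)$; in particular, over $z=0$ the manifold $M_{p_k}$ contributes the ray from $(0,0)$ in the direction $-1/p_k=R_0^{-1}e^{i(\pi-2\pi k/n)}$, so the $n$ rays of $\bigcup_k M_{p_k}$ over $0$ are equally spaced with angular gap $2\pi/n$. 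Each such ray is then strictly contained in the cone of opening $4\pi/n$ spanned by its two neighbours, and this opening is $<\pi$ exactly when $n\ge 5$ (for $n=4$ the two neighbours are antipodal). Hence the vertex set $Y$ realises the configuration demanded by Theorem 1 at $z_0=0$ (the five points it asks for being available since $n\ge 5$), and since $0$ is not a vertex, the rays vary continuously with $z$ and the same configuration persists at every $z$ near $0$.

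Theorem 1 now supplies $\epsilon>0$, $R>0$, and constants $C_z$ depending continuously on $z$ for $|z|<\epsilon$, with $|f(z)|\le C_z\bigl(\int_{|w|<R}|f(w)|^{p}\,dA\bigr)^{1/p}$ for all $f\in{\cal A}_Y$. Choosing $\delta$ with $0<\delta<\epsilon$ and setting $C=\sup_{|z|\le\delta}C_z$, which is finite by continuity and compactness of $\{|z|\le\delta\}$, already proves the asserted estimate with the annulus replaced by the disc $\{|w|<R\}$. The remaining task is to shrink this disc to an annulus $\{r_1<|w|<r_2\}$ with $0<r_1<r_2$; this is the only point at which the symmetry of the regular polygon, rather than just Theorem 1, is used.

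For this I would go back into the mechanism of Theorem 1 (via Lemma 1 and Proposition 2). The value $f(z)$, $|z|<\delta$, is recovered by Cauchy integrals over the boundaries of analytic discs whose boundaries lie in the manifolds $M_{p_k}$ near the edge point $(z,\overline z)$, and the only delicate piece of each such integral --- the part near an edge --- is controlled by Lemma 1, whose conclusion is \emph{already} an $L^{p}$ norm over an annulus $\{r_1\le|w|\le r_2\}$, namely the band swept out by the radii $r(t)$ of the circles feeding the $H^p$ estimate. So what has to be shown is that, as $z$ ranges over $|z|<\delta$ and the discs range over the finitely many pairs $(p_i,p_j)$ involved, all these radii lie in one fixed interval $[r_1,r_2]$ with $r_1>0$. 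Boundedness from above is immediate from the boundedness of the discs built into Propositions 1 and 2. For the lower bound I would invoke Proposition 1: the discs may be taken with $|{\rm Re}\,\Phi_i|$ bounded below, i.e. lying on Levi leaves uniformly separated from the leaves through the evaluation point, which keeps the corresponding circles at a definite distance from $0$; combined with the facts that $0$ is the centre of $L$ and not a vertex and that the rotational symmetry lets one band serve all $n$ vertices simultaneously, this produces a single annulus $\{r_1\le|w|\le r_2\}$, $r_1>0$, working uniformly for $|z|<\delta$. Summing the finitely many disc contributions and absorbing constants then yields the inequality.

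The step I expect to be the real obstacle is this last one: producing a single annulus, bounded away from both $0$ and $\infty$, that governs the point evaluation for every $z$ with $|z|<\delta$. The upper bound is routine, but the separation from $0$ genuinely uses Proposition 1 together with the fact that $0$ is the centre of the polygon; checking carefully that the circles generated by the disc construction do stay at distance bounded below from $0$, uniformly in the evaluation point, is where the work lies. The rest --- the reduction to Theorem 1, the continuity and compactness bounding the constants, and the recombination of the finitely many contributions --- is straightforward.
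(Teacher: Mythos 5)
Your proposal follows essentially the same route as the paper: verify that the $n$ rays of $\bigcup_k M_{p_k}$ over $0$ are equally spaced with gap $2\pi/n$ so that for $n\ge 5$ each is covered by the wedge extension of its neighbours, recover $f(z)$ by Cauchy integration over a small circle in the fibre using Lemma 1's annular $L^p$ bound, and obtain the crucial lower bound $r_1>0$ from Proposition 1 by choosing discs whose Levi leaves stay away from the leaf through $0$, passing to nearby $z$ by continuity. You correctly single out the uniform separation from $0$ as the real content, which is exactly what the paper's case analysis on the angle $\alpha$ (via $\mathrm{Re}\,\phi_i(q)\ne 0$) is doing.
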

\begin{proof}
The technique of the main lemma gives the estimate. The lower bound, $r_1$ results from choosing an analytic disc (affine) whose boundary avoids the leaves of foliations of the $M_i$'s containing 0, over the part of the arc where an estimate depends on a particular $M_i$. 
Let $M_1,\dots, M_n$ be the associated $CR$ half manifolds for the one dimensional extensions. In a small neighborhood of $z=0$, let $\widetilde{M_i}$ be the CR manifolds whose fiber over z is the same as the fiber of $M_i$ over 0. Each $\widetilde{M_i}$ is also a Levi flat. If construct an analytic disc with the desired properties with respect to the $\widetilde{M_i}$'s, then the same relation will hold between the analytic disc and the $M_i$'s, in a small enough neighborhood of 0.

The fiber over $z$ of  $\widetilde{M_i}$ is a ray from 0 in the direction $[0,\overline{p_i}]$. The angle between two adjacent  rays is $\frac{2\pi}{n}.$ 

Consider the circle $S$,  $z=0, |w|=\delta$ for a  small $\delta$ to be fixed later. For each point $q$ on $S$, there is an $L^p$ estimate for 
the value of $F$, expressed in terms of an integral in the $z$-plane. 
This estimate depends on the two closest $\widetilde M_i$'s, unless $q$  lies in a small neighborhood of $\widetilde M_i$, (the size of the neighborhood determines the coefficient in the $L^p$ estimate).
In that case, the adjacent $\widetilde M_i$'s are used. Because $n\ge 5$, the region of analytic continuation of those $\widetilde M_i$'s will cover $q$. Each $\widetilde M_i$ is determined by $\Im \phi_i=0$, where $\phi$ is a linear form. In general, the forms are quadratic; the linear approximation will suffice to understand the geometry. The holomorphic leaf on $M_i$ is determined by $Re\phi_i$. Therefore, if we show that $Re\phi_i(q)\ne 0, Re\phi_j(q)\ne 0$, where $\widetilde M_i$ and $\widetilde M_j$ are the Levi flats used for the estimate at $q$, then using the disc from Proposition 1, we get an integral over a region which does not contain $z=0$. By continuity we can find a non-zero lower bound.  

Write $q=(0, \delta e^{i\alpha})$. Fix an angle $\theta_0$; we will see that $\theta_0 =\frac{\pi}{18}$ will suffice.  We have two cases. 
\begin{enumerate}
\item $|\alpha-\frac{2l\pi}{n}|>\theta_0, l=1,2,\dots,n$.
\item $|\alpha-\frac{2l\pi}{n}|<\theta_0$, for some $l$. 
\end{enumerate}

In the first case you use the two closest $M_i$'s. In the second, you are close to one $M_i$ and use the adjacent ones on either side for estimation. It is evident that if $n\ge 5$ and $\theta <\frac{\pi}{10}$ for the pentagon (the worst case) that you have $Re(w_iq)>c\delta$ for some small $c>0$. 




An estimate for the original $M_i$'s and for all $z$ close enough to $0$, follows by continuity.

\end{proof}

By bounded point evaluation for $B_X$ and by scaling, we have that for any $p, |p|>1$, $|f(z))\le C\int_{|z|\le K|z}|f|^2dA(z)$,
where $C$ and $K$ are independent of $p$. This integral does not contain the Gaussian weight, since it depends on only on the geometry of the $M_a$'s. 
 We can derive a restriction on the growth of a function in $B_X$. As with analytic functions, set $M_r=\sup_{|z|=r}|f(z)|$.

\begin{theorem}
Let $f\in B_X$. The for any $t>\frac{K^2}{2}$, 
$$\limsup_{r\rightarrow \infty}\frac{M_r}{e^{tr^2}}=0.$$

\end{theorem}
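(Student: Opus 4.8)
The plan is to combine the scale-uniform bounded point evaluation recorded in the paragraph just above with the elementary observation that the Gaussian weight costs at most $e^{K^2r^2}$ on a disc of radius $Kr$. Fix $f\in B_X$ and put $N=\left(\int_{{\bf C}}|f(z)|^2e^{-|z|^2}\,dA(z)\right)^{1/2}$, which is finite by definition of $B_X$. For any $p$ with $|p|=r>1$ the cited estimate gives constants $C$ and $K$, both independent of $p$, such that
$$|f(p)|\le C\left(\int_{|z|\le Kr}|f(z)|^2\,dA(z)\right)^{1/2},$$
the integral being unweighted. Since $|z|\le Kr$ forces $e^{|z|^2}\le e^{K^2r^2}$,
$$\int_{|z|\le Kr}|f(z)|^2\,dA(z)=\int_{|z|\le Kr}|f(z)|^2e^{-|z|^2}\,e^{|z|^2}\,dA(z)\le e^{K^2r^2}N^2.$$

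Feeding the second inequality into the first and taking the supremum over the circle $|p|=r$ yields $M_r\le C^{1/2}Ne^{K^2r^2/2}$ for every $r>1$. Consequently, for any $t>K^2/2$,
$$\frac{M_r}{e^{tr^2}}\le C^{1/2}N\,e^{(K^2/2-t)r^2}\longrightarrow 0\qquad(r\to\infty),$$
because the exponent $K^2/2-t$ is strictly negative; this is the assertion of the theorem.

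Thus the theorem is, given the earlier results, essentially a one-line computation, and there is no real obstacle left \emph{in this step}: the genuine content lies in the scale-uniform point evaluation $|f(p)|\le C\bigl(\int_{|z|\le K|p|}|f|^2\,dA\bigr)^{1/2}$ with $C,K$ independent of $p$, which itself rests on the point-evaluation estimates of the previous sections and on Proposition 3, together with the fact that $X={\bf Z}\cup\omega{\bf Z}\cup\omega^2{\bf Z}$ contains vertices of regular polygons of arbitrarily large size centered at $0$, so that any point of modulus $r$ is enclosed by such a polygon of comparable size and the wedge-extension configuration used there is a fixed one rescaled. The only bookkeeping point worth flagging is that the point evaluation controls $|f(p)|$ by the \emph{square root} of the $L^2$ integral, which is why the weight's contribution $e^{K^2r^2}$ is halved and the threshold comes out to $t>K^2/2$ rather than $t>K^2$. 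No normal-families or compactness argument is needed here, since $B_X\subseteq{\cal A}_X$ (already established) guarantees that $f$ is real analytic and $M_r$ finite.
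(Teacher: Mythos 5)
Your proof is correct and follows essentially the same route as the paper: apply the scale-uniform bounded point evaluation on the disc $|z|\le K|p|$, insert the Gaussian weight, and compare exponents to get $M_r\lesssim e^{K^2r^2/2}$. The paper casts this as a contradiction argument and (apparently through a typo) writes the point evaluation without the square root while compensating with the factor $e^{\frac{K^2}{2}|z_n|^2}$; your direct version, with the square root correctly placed so that the weight's contribution $e^{K^2r^2}$ is halved, is the cleaner bookkeeping and lands on the same threshold $t>\frac{K^2}{2}$.
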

\begin{proof}
Suppose not. Then for some $\delta>0$, there is a sequence $z_n$, $|z_n|\rightarrow \infty$ and $|f(z_n)|\ge \delta e^{t|z_n|^2}$.
We gave 
$$\delta e^{t|z|^2}\le C\int_{|z|\le K|z_n|}|f(z)|^2 dA(z)\le C e^{\frac{K^2}{2}|z_n|^2}\int_{|z|\le K|z_n|}|f(z)|^2e^{-|z|^2}dA(z).$$
From this we derive that $\frac{\delta}{C}e^{(t-\frac{K|z_n|^2}{2}}\le ||f||_2$, which forces the right side to be $\infty$. 
\end{proof}

Following the method of Levin, we can make progress in coefficient estimation. 
Let $f\in B_X$. Pick $M<\frac{K^2}{2}$. Then $|f(z)|<Ce^{M|z|^2}$.
For some $K_1$, $\sup_{|z|<r,|w|<r}<C\sup_{|z|<K_1r}|f(z)|$. Here, $f(z,w)$ is the complexified power series for $f$. 
The constant $K_1$ is not strictly related to the bounded point evaluation---we only need the Fr\`{e}chet space theorem here.

Putting these estimates together, we can say that for $t>\frac{K^2}{2}$ we have estimates of the form
$$sup_{|z|<r,|w|<r}|f(z,w)\le Ce^{tM^2r^2}.$$

Writing $f(z,w)=\Sigma_{n,m\ge 0}a_{mn}z^n(w g(z))^m)$ we can obtain some Cauchy estimates for coefficients. One would like to apply the Levin method, but the zeroes of $g(z)$ make it difficult to get a clean answer. In future work, we hope to show give conditions on the coefficient sequence of a function in $B_X$, and in reverse, to show how to estimate the norm based on coefficient estimates.

We also offer the following observation.
Suppose that the weight $\omega=e^{-k|z|}$ is used.
Then, depending on $k$, the growth of $\sin(z)$ suggests that 
$L^p({\bf C}, \omega)$ consists of series  which are polynomial in $\overline z$. In order to prove this, one needs coefficient estimation.
This example appears to relate to the topic of polyanalytic functions. What we would show is that with a weight $e^{-t|z|}$ we can construct Bergman spaces of polyanalytic functions: all of them real analytic, and with bounded point evaluation.

\section{Hybrid Bergman-Hardy spaces on CR manifolds}
Let $M\subseteq {\bf C^2}$ be a smooth $CR$ manifold such that the fiber over each point $z$ is a simple closed curve. Denote by $\Omega$ the domain which is the union of the interiors of the $M_z$'s. Assume that $\{(z,w):w=0\}\subseteq \Omega$ 
Given a discrete $X$ which satisfies the conditions of Theorem --, let $A_{M,X}=\overline{\pi^*({\cal A}_X)\otimes{\cal O}({\bf C})}$, where the closure is taken in the topology of uniform convergence on compacta. From \cite{La2}, an $f\in A_{M,X}$ is a continuous function on $M$  with  the following properties.
\begin{enumerate}
\item $\int_{M_z}f(z,w)dw \in {\cal A}_X$, where $M_z$ is the fiber of $M$ over $z$. 
\item every $f\in A_{M,X}$ extends continuously to a function $f(z,w)$ on $\Omega$ which is holomorphic in $w$ and real analytic in both variables. 
\end{enumerate}
For the next construction, we specialize to the case where the fibers of $M$ are circles centered at $0$. 
In this case $\Omega=\{(z,w)|w|\le e^{-\phi(z)}\}$, where $\phi$ is plurisubharmonic on $\bf C$. We assume that $\phi$ is not constant, which means that the fibers $M_z$ shrink to 0 as $|z|\rightarrow \infty$. 
Define a Bergman-Hardy space on $M$ as follows. We work in $L^p(M)$ with $\int_M g(z,w)d\mu=\int_{\bf C}\int_{M_z}g(z,w)|dw|dA(z)$. 

\begin{defn}
$B_{M,X,p}$ is the closure in $L^p(M,e^{-|z|^2}d\mu))$ of 
$\{f\in A_{M,X}: \int_M |f(z,w)|^pe^{-|z|^2}d\mu<\infty\}$. 
\end{defn}
Combining information about $A_{M,X}$ with basic Hardy space theory, we have the following proposition.
\begin{prop}
If $f\in B_{M,X,p}$, then the following hold.
\begin{enumerate}
\item $f$ extends to a real analytic function on $\Omega$ which is holomorphic in $w$, whose non-tangential  limits  in the  vertical ($w$) direction equal $f$ almost everywhere.
\item for almost every $z$, $f|{M_z}\in H^p{M_z}$, 
\item as $t\rightarrow 1$, $\int_{tM}|f(z,w)|^pe^{-|z|^2} td\mu\rightarrow \int_M|f(z,w)|^pe^{-|z|^2} td\mu$, where $tM$ is the dilation of $M$ in the vertical fiber, and $td\mu$ is the measure which is scaled by $t$ on each fiber in the obvious way.
\end{enumerate}
\end{prop}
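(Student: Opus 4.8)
The plan is to reduce each claim to a fiberwise statement about Hardy spaces on circles, combined with the known structure of $A_{M,X}$ recalled from \cite{La2}. Throughout, write $f$ as the uniform-on-compacta limit of a sequence $f_j\in A_{M,X}$ that is also Cauchy in $L^p(M,e^{-|z|^2}d\mu)$. Since $\phi$ is plurisubharmonic and nonconstant, the fiber $M_z$ is the circle $|w|=e^{-\phi(z)}$, and for each fixed $z$ the restriction $f_j(z,\cdot)$ is a boundary value of a function holomorphic in the disc $|w|<e^{-\phi(z)}$; this is exactly the content of property (2) in the description of $A_{M,X}$.

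For part (1): first extract, via the $L^p$-bound and Fubini, a subsequence along which $\int_{M_z}|f_j(z,w)-f_k(z,w)|^p|dw|\to 0$ for almost every $z$, so that $f_j(z,\cdot)$ is Cauchy in $H^p(M_z)$ for a.e.\ $z$. For such $z$ the limit is an $H^p$ function on the disc $|w|<e^{-\phi(z)}$, giving a well-defined holomorphic-in-$w$ extension $\tilde f(z,w)$ on $\Omega$. To see this extension is jointly real analytic, I would use the Cauchy (Poisson) integral over the fiber: $\tilde f(z,w)=\frac{1}{2\pi i}\int_{M_z}\frac{f(z,\zeta)}{\zeta-w}\,d\zeta$ for $|w|<e^{-\phi(z)}$, and note that the kernel depends real-analytically on $(z,w)$ because $z\mapsto e^{-\phi(z)}$ and the structure of $M$ are real analytic (this is the part of the $A_{M,X}$ theory where real analyticity in both variables is already established for the approximants, and it passes to the interior integral by local uniform convergence). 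The non-tangential limit statement is then the classical Fatou theorem applied fiberwise inside each disc $|w|<e^{-\phi(z)}$, together with the identification $\tilde f(z,\cdot)|_{M_z}=f(z,\cdot)$ in $H^p(M_z)$ for a.e.\ $z$.

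Part (2) is essentially already obtained in the course of part (1): the a.e.-convergence of $f_j(z,\cdot)$ in $H^p(M_z)$, and the fact that $H^p$ of a circle is closed, force $f|_{M_z}\in H^p(M_z)$ for almost every $z$. One should also record that $\int_{M_z}|f(z,w)|^p|dw|$ is measurable in $z$ and integrates (against $e^{-|z|^2}dA$) to $\|f\|_{L^p(M,e^{-|z|^2}d\mu)}^p$, which is finite by hypothesis. For part (3): on each fiber, the dilation $t\mapsto f(z,tw)$ is the standard radial approximation of an $H^p$ function, so $\int_{M_z}|f(z,tw)|^p|dw|\to\int_{M_z}|f(z,w)|^p|dw|$ as $t\to1$ for a.e.\ $z$, and moreover this quantity is monotone nondecreasing in $t$ (again classical $H^p$ theory: the $p$-th power of the $H^p$-means is nondecreasing in the radius). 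Multiplying by the $t$-independent weight $e^{-|z|^2}$, integrating in $z$, and invoking the monotone convergence theorem yields the claimed convergence $\int_{tM}|f|^pe^{-|z|^2}\,t\,d\mu\to\int_M|f|^pe^{-|z|^2}\,d\mu$; one must be slightly careful that $t\,d\mu$ is defined precisely so that the fiber integral of $|f(z,tw)|^p$ against $t\,d\mu$ equals $\int_{M_z}|f(z,w)|^p|dw|$ after the change of variables, which is what makes the monotonicity in $t$ literally true.

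The main obstacle I anticipate is not any single one of the three assertions in isolation — each is a fiberwise classical fact — but rather the \emph{uniformity in $z$} needed to glue the fiberwise statements into global ones: namely, controlling the exceptional $z$-null-sets so that the ``a.e.\ $z$'' clauses in (1) and (2) are genuinely compatible, and justifying that the joint real analyticity survives the limit. The cleanest route is to prove joint real analyticity of $\tilde f$ directly from the fiber Cauchy integral representation on compact subsets of $\Omega$, where local uniform convergence of $f_j\to f$ (guaranteed because $B_{M,X,p}\subseteq A_{M,X}$-type membership is forced, as in the flat case, by normal families and the $L^p$ bound) lets one differentiate under the integral sign; once $\tilde f$ is known to be real analytic and holomorphic in $w$ on all of $\Omega$, its fiber restrictions are automatically in $H^p$ wherever the $L^p$ norm is finite, and the non-tangential convergence and the dilation limit follow from the scalar theory with no further difficulty.
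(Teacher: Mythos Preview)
Your proposal is correct and follows essentially the same approach as the paper: the paper's own proof is a single sentence that attributes part (1) to \cite{La2}, part (2) to ``standard theory'' (fiberwise $H^p$), and part (3) to the monotone convergence theorem, which is precisely the skeleton you have fleshed out. Your version supplies the details the paper omits---extracting the a.e.\ fiberwise $H^p$ limit via Fubini, the Cauchy-integral representation for the extension, and the monotonicity of $H^p$ radial means---but the underlying argument is the same.
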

The third point follows from the monotone convergence theorem; the second follows from standard theory, and the first was proved in \cite{La2}. 
Any $f\in B_{M,X,p}$ has  a series representation
$f(z,w)=\Sigma_{l,m,n}a_{lmn}z^m(\overline z g(z))^nw^l$. To find estimates for the coefficients,
take $b_{m}(z)=\frac{1}{2\pi i}\int_{M_z}\frac{f(z,w)}{w^n}dw.$
The $b_{n}$'s may not be integrable in the given weight, but  are in $L^p\{{\bf C}, e^{-|z|^2-|z|}\}$. This means that coefficient estimates, similar to those for $B_{X,\omega}$, can be obtained.

\section{A Bergman space on the unit disc}
It is clear from the basic construction of algebras that a sufficiently dense discrete  $X$ set of zeros in the unit disc, results in a  closed Fr\'echet algebra of real analytic functions; with more restrictions on the zero set of $g$, local $L^p$ estimates for bounded point evaluation will exist. 
The interesting case looks like $p=2$ and the function $g(z)$ is bounded analytic. This of course implies a restriction on the zero set of $g$.
It's not clear whether such an example could exist; however, let us assume such a situation and look at associated shift or shift-type operators.

So we suppose that $g(z)$ is a bounded analytic function on the unit disc with zeroes at $X\subseteq \Delta$, such that 
there is $L^2$ bounded point evaluation for ${\cal A}_{X,2}$, which is what we will call this space of functions.
An important distinction here is that a function $f\in {\cal A}_{X,2}$ need not be represented by a power series which converges on the entire disc. What we can do is say that $f(z)=\Sigma_{m,n\ge 0}a_{mn}z^m(\overline{z}g(z))^n$ in neighborhood of 0; furthermore, this representation is unique. Looking at the power series, we can construct shift operators. 
\begin{enumerate}
\item The classical shift operator $f\rightarrow zf$. 
\item Multiplication by $(\overline z)(g(z))$. It is in this step that we would use the fact that $g$ is bounded.
\item One can multiply the shifts from 1) and 2) together. You could also combine with projection onto the orthogonal complement in 
${\cal}_{X,2}$ of the usual Bergman space. 
\end{enumerate}

This suggestion is for mathematicians who know more about shift operators than the author. We cannot say whether any of these shift operators are interesting. 

\begin{defn} Let $X$ be a discrete set a domain $\Omega$. Denote by ${\cal A}_{\Omega, X}$ the set of continuous functions $f(z)$ on $\Omega$ such that if $p \in X$ and 
$\{|z-p|\le r\}\subseteq \Omega$, then $f|_{|z-a|=r}$ extends holomorphically to $|z-a|<r$. 
\end{defn}

It is clear from the construction of real analytic Fr\'{e}chet spaces that for $X$ which is thick enough at $\partial \Omega$, the functions in ${\cal A}_{\Omega,X}$ will be real analytic. 
Let $g(z)$ be an analytic function on $\Omega $ with simple zeroes exactly at the points of $X$.  Then $z$ and $\overline z g(z)$ generate an algebra, but it is not clear that this algebra is
${\cal A}_{\Omega, X}$. With a suitable weight on $\Omega$ one can construct a Bergman space.  We describe a construction for the unit disc.

The proof of the following lemma is elementary.

\begin{lemma}Let  $p1, p2, . . . , p5$ be the vertices of a convex pentagon $Q$ in $\bf C$. Let
$L1, L2, . . . , L5$ be the secants of $Q$, and let $s1, . . . , s5$ be the intersection points
of the secants which are in the interior of $Q$. They are the vertices of another
convex pentagon $\widetilde Q$. Then the geometric conditions for applying Theorem 1 to
${\cal A}_{p_1...p_5}$ hold for any point in the interior of $\widetilde Q$. In the case of the algebra ${\cal A}_{\Omega,X}$
the condition applies, if for each point $w \in Q$, and each $p_i$, the disc centered
at $p_i$ of radius $|p_i-w|$ is contained in $\Omega$.
\end{lemma}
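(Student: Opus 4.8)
The plan is to reduce the statement to elementary plane geometry of the pentagram. First I would fix notation: the five ``secants'' of $Q$ are its diagonals, and I write $\ell_i$ for the diagonal line through $p_{i-1}$ and $p_{i+1}$ (indices mod $5$), i.e.\ the one that cuts the vertex $p_i$ off $Q$. The five interior intersection points are $s_i=\ell_i\cap\ell_{i+1}$, and their convex hull $\widetilde Q$ is the inner pentagon of the pentagram; the property of $\widetilde Q$ I will use is the standard description as the intersection of the five closed half-planes that the diagonals cut off, i.e.\ a point lies in the interior of $\widetilde Q$ precisely when, for every $i$, it is strictly on the opposite side of $\ell_i$ from $p_i$. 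Two consequences for $z$ in the interior of $\widetilde Q$: (a) for each $i$, $z$ and $p_i$ lie on strictly opposite sides of $\ell_i$; and (b) the interior of $\widetilde Q$ lies in the interior of $Q$ and meets no edge-line $p_jp_{j+1}$, so $z$ is not on any line joining two of the $p_i$ --- the tacit nondegeneracy needed before a wedge extension can be invoked.

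Second, I would translate the hypothesis of Theorem~1 at a point $z$ into a condition on directions. Substituting $t=|z-p_i|^2$ into $w=t/(z-p_i)+\overline{p_i}$ gives $w=\overline z$, so the fiber of $M_{p_i}$ over $z$ is a ray issuing from the point $(z,\overline z)$ --- and this base point is the \emph{same} for all $i$ --- in the direction $1/(z-p_i)$, hence parallel to $\overline{z-p_i}$. Since complex conjugation is an isometry of ${\bf C}$, a ray in direction $\overline{z-p_i}$ lies in the sector of opening less than $\pi$ spanned by the rays in directions $\overline{z-p_j}$ and $\overline{z-p_k}$ if and only if $z-p_i$ lies in the sector of opening less than $\pi$ spanned by $z-p_j$ and $z-p_k$; and, provided $z$ is not on the line $p_jp_k$, this holds if and only if the ray from $z$ through $p_i$ meets the open segment $(p_j,p_k)$. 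So the hypothesis of Theorem~1 holds at $z$ exactly when, for each $i$, there is a pair $j,k$ with the ray from $z$ through $p_i$ crossing $(p_j,p_k)$.

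Third --- the geometric heart --- I would verify this for every $z$ in the interior of $\widetilde Q$, taking $\{j,k\}=\{i-1,i+1\}$. By (a), $z$ is strictly on the far side of $\ell_i$ from $p_i$, so the segment $[z,p_i]$ meets $\ell_i$ in a single point $q$; since $z,p_i\in Q$ and $Q$ is convex, $q\in Q$, and since $Q$ is convex with $p_{i-1},p_{i+1}$ on $\ell_i$ one has $\ell_i\cap Q=[p_{i-1},p_{i+1}]$, so $q\in[p_{i-1},p_{i+1}]$; and $q$ is not an endpoint, since that would place $z$ on the edge-line $p_ip_{i-1}$ or $p_ip_{i+1}$, contradicting (b). Hence the ray from $z$ through $p_i$ meets the open diagonal $(p_{i-1},p_{i+1})$, and the two bounding rays are not antiparallel (that would force $z\in\ell_i$), so the spanning sector has opening less than $\pi$. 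By the translation of the previous paragraph this is precisely the hypothesis of Theorem~1 at $z$, proving the first assertion.

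Finally, for ${\cal A}_{\Omega,X}$ the only place the ambient domain enters is that the Cauchy-integral representation used in Proposition~2 and Theorem~1 requires the CR lift of $f$ to arcs of the $M_{p_i}$ over a neighbourhood of the evaluation point $w$, and for $f\in{\cal A}_{\Omega,X}$ the lift along a circle $|z-p_i|=\rho$ exists only when the disc $\{|z-p_i|\le\rho\}$ lies in $\Omega$. The radii $\rho$ that arise for evaluation at $w$ are at most $|p_i-w|$ (shrink the analytic discs $\Phi$ if necessary), and these discs are nested, so it is enough that $\{|z-p_i|\le|p_i-w|\}\subseteq\Omega$ for each $i$; demanding this at every $w\in Q$ --- in particular at every point of a $Q$-neighbourhood of each point of the interior of $\widetilde Q$ --- is exactly the stated hypothesis, and with the lifts available the argument above carries over unchanged. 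The one place I expect to need genuine care is the orientation bookkeeping of the second paragraph --- confirming that conjugation carries the ``opening $<\pi$'' sector to the ``opening $<\pi$'' sector and not to its complement, and that the wedge extension really fills the minor sector --- together with the minor fuss of pinning down exactly which discs are needed in the $\Omega$ case; everything else is routine convexity.
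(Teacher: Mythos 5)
The paper offers no argument for this lemma at all (it is dismissed as ``elementary''), so there is nothing to compare against line by line; what you have written is, in effect, the missing proof, and it is essentially correct. Your reduction is the right one: the fiber of $M_{p_i}$ over $z$ is a ray emanating from the common base point $(z,\overline z)$ in the direction $\overline{z-p_i}$, conjugation preserves the ``sector of opening less than $\pi$'' relation, and so the hypothesis of Theorem~1 becomes the statement that the ray from $z$ through $p_i$ crosses the open diagonal $(p_{i-1},p_{i+1})$; the half-plane description of the inner pentagon $\widetilde Q$ then delivers exactly this for every $i$. The endpoint exclusion via the edge-lines and the non-antiparallelism via $z\notin\ell_i$ are the right points to check, and you check them.

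The one genuine slip is in the $\Omega$ clause, and it runs in the opposite direction from what you wrote. The leaf of $M_{p_i}$ with parameter $t$ lies over the disc $|z-p_i|\le\sqrt t$ and its CR lift is the holomorphic extension of $f$ from the circle of radius $\sqrt t$; the fiber over the evaluation point $z$ is $\{t\ge|z-p_i|^2\}$, so the circles you must lift from have radius \emph{at least} $|z-p_i|$ (a compact range $[|z-p_i|,\,|z-p_i|+\delta]$, not radii you can shrink below $|z-p_i|$). The stated hypothesis still suffices, but for a different reason than the one you give: because it is quantified over \emph{all} $w\in Q$, it guarantees that the disc of radius $\max_{w\in Q}|p_i-w|=\max_j|p_i-p_j|$ about $p_i$ lies in $\Omega$, and since the evaluation point is interior to $Q$ this strictly exceeds $|z-p_i|$, leaving the room you need for the slightly larger leaves. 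Replace ``at most $\dots$ shrink the analytic discs'' with this observation and the proof is complete.
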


Using this lemma, we can estimate how dense $X$ has to be at the circle to obtain a real analytic Bergman space. 

\begin{theorem}
There is a discrete set $X\subseteq D, X = \{a1, a2, . . .\}$ such that there
is local bounded point evaluation in $L_p$
for every point in the disc.
\begin{enumerate}
\item For any $ p > 2$,  there is a local $L_p$
bounded point evaluation estimate.
\item  For any $t > 1, \Sigma_{n=0}^{\infty}(1-|a_n|)^t <\infty$.
\end{enumerate}
\end{theorem}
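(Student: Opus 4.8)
The plan is to realize $X$ as a union of finite rings together with one fixed finite central set: the $k$-th ring consists of $N_k$ equally spaced points on the circle $|z|=r_k$, where $r_k=1-2^{-k}$, and the counts $N_k$ (and, if needed, a tiny perturbation of the radii $r_k$) will be chosen so that the geometric hypothesis of Theorem 1, in the form supplied by the preceding Lemma on convex pentagons, is available at every point of $D$, while $N_k$ grows slowly enough for part (2).

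First I would make the local requirement explicit. By the Lemma on convex pentagons it suffices, for each $w\in D$, to produce five points of $X$ forming the vertices of a convex pentagon $Q$ with $w$ interior to the inner pentagon $\widetilde Q$, and such that for every vertex $p_i$ and every $w'\in Q$ the disc of radius $|p_i-w'|$ about $p_i$ lies in $D$. Since the point of that disc farthest from $0$ is at distance $|p_i|+|p_i-w'|$, a one-line estimate (using that the diameter of $Q$ is at most $2d$ and $|p_i|\le|w|+d$ whenever all five vertices lie within distance $d$ of $w$) shows the disc condition holds as soon as $d<\frac13(1-|w|)$; so taking $d\le\frac14(1-|w|)$ is safe. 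Thus the problem is purely combinatorial: arrange $X$ so that every $w\in D$ is surrounded, at the scale $\delta(w):=1-|w|$, by a convex pentagon of points of $X$ which is in sufficiently general position relative to the finitely many transverse secant lines entering the Lemma and Theorem 1.

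Second, I would build $X$ scale by scale. Take $N_k$ to be a fixed multiple of $2^{k}$, chosen so large that on $|z|=r_k$ the angular gap between consecutive points, and hence (as $r_k\approx 1$) the arclength gap, is a small fraction of $2^{-k}$; the radial gap $r_{k+1}-r_k=2^{-(k+1)}$ between adjacent rings is of the same order. For $w$ with $\delta(w)\asymp 2^{-k}$, the points of $X$ on the rings $k-1,k,k+1$ lying within distance $\asymp 2^{-k}$ of $w$ form a roughly square array of mesh $\ll\frac14\delta(w)$; from such an array one can select five points forming a convex pentagon of small diameter with $w$ interior to its inner pentagon, and a generic choice of the radii $r_k$ (or a small perturbation of the ring points) rules out any incidental alignment with the secant lines, so the Lemma applies and yields the geometric hypothesis of Theorem 1 at $w$. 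For $w$ in a compact subset of $D$ one uses the fixed central set in the same way. Theorem 1 then furnishes, for each $p>2$ and each $w\in D$, a local $L^p$ bounded point evaluation with constants depending continuously on $w$; this is part (1).

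Part (2) is then a counting statement: each $a_n$ on the $k$-th ring has $1-|a_n|=2^{-k}$, and there are $N_k\asymp 2^{k}$ of them, so for any $t>1$
$$\sum_{n}(1-|a_n|)^{t}\;\asymp\;\sum_{k}N_k\,2^{-kt}\;\asymp\;\sum_{k}2^{-k(t-1)}\;<\;\infty .$$
I expect the crux to be the second step: checking \emph{uniformly} in $w$ — in particular for $w$ lying between two consecutive rings and for $w$ arbitrarily close to $\partial D$ — that the mesh-$2^{-k}$ array around $w$ genuinely contains five points of $X$ in the general position the Lemma demands and with $w$ interior to the inner pentagon, and that the resulting Theorem 1 constants stay locally bounded. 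Once the rings are set up with $N_k\asymp 2^{k}$, the summability in part (2) is immediate; note that the divergence of $\sum_k 2^{-k(t-1)}$ at $t=1$ matches the fact that the statement is only claimed for $t>1$, just as $p>2$ is inherited from the point-evaluation Lemma.
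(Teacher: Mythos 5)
Your proposal follows essentially the same route as the paper: populate each dyadic annulus $1-2^{-k}\le |z|<1-2^{-k-1}$ with $O(2^k)$ points arranged so that the convex-pentagon lemma applies at every interior point at scale $\delta(w)=1-|w|$, and then the count $\sum_k 2^k\,2^{-kt}<\infty$ for $t>1$ gives part (2), exactly as in the paper's covering by $O(2^n)$ pentagons of side $O(2^{-n-2})$. The only adjustment needed is that a single circle per dyadic scale leaves radial gaps comparable to $\delta(w)$ rather than $\ll\frac14\delta(w)$, so one should insert a bounded number of sub-rings per dyadic annulus (which leaves $N_k\asymp 2^k$ and hence the summability unchanged); with that patch your argument matches the paper's.
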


\begin{proof}
Using Lemma 3, we can cover the annulus $1-2^{-n}\le z<1-2^{-n-1}$ 
the union of $k2^n$
 pentagons ($k$ independent of $n$) whose rectangular part has
side length $O(2^{-n-2})$
 guaranteeing that the estimate of Lemma 3 will hold at
interior points. Then the estimate on approach of zeros follows immediately

\end{proof}
If one could get a zero set with the Blaschke condition, then using a bounded
$g(z)$, one could construct a Bergman space of real analytic functions in $L^p(D)$.
The preceding theorem indicates that for some weight $\omega$, there is a Bergman
space of real analytic functions contained in $L_p(D,\omega)$. 
We denote this space by $B_{D,X,p,\omega}$. 

It would be interested to find out if $B_{D,X,p,\omega}$ is generated by $z$ and $\overline z g(z)$.

\section{Application to partial differential equations and further questions}

On a formal level, one can construct many PDE's of evolution type with constant coefficients, both linear and non-linear, such that solutions with initial data in ${\cal A}_X$ remain in ${\cal A}_ X$  for positive $t$. Consider the equation 
$u_t+u_{\overline z}=0$, or $u_t+u_{\overline z \overline z}$ for an unknown function $u(t,z)$ with initial conditions $u(0,\cdot)$. 
If $u(t,z)$ has the form $\Sigma_{n,m\ge 0}u_{nm}tz^n(\overline z g(z)^n$, then any number of 
$\frac{\partial}{\partial \overline z}$ derivatives remain formally in ${\cal A}_X$, which means that if $u(0,\cdot)\in {\cal A}_X$, then $u(t,\cdot)\in{\cal A}_z$ for $t>0$, formally.
We take the modified transport equation $u_t+u_{\overline z}=0$ to demonstrate how this can work for initial data which are polynomial in $\overline z$.
Suppose $u(0,z)=\Sigma_{m=0}^k u_m(t,z)(\overline z g(z))^k$, and we want to find a solution of the transport equation with $u_m$ holomorphic in $z$. If the initial conditions are constants $u_k(0,z)=U_k, 0\le k\le m, U_k=0, k>m$, then it is simple to show that $u_l(t,z)=0$ for $l>m$ and $u_k(t,z)$ is a polynomial in $t$ of degree $m-k$, for $0\le k\le m$. This is  a simple example. Certainly stronger results could be proved, if there is some potential application.



  \section{Approximation of Fock spaces by real analytic Bergman spaces}
The results of this section could be stated in greater generality, but we focus on the well known example of Fock spaces to illustrate the principle. 
Let $X\subseteq \bf C$ be a discrete set having the property that $nX\subseteq X$. 
Suppose also that for every positive integer $n$,  $\frac{1}{n}X\subseteq X$ . With this assumption, the point evaluation estimates for ${\cal A}_{ X}$ also hold for all ${\cal A}_n={\cal A}_{\frac{1}{n}X}$
The following theorem is clear.
\begin{theorem} $\cap_n {\cal A}_n =B$, the Fock space. 
\end{theorem}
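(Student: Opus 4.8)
The plan is to show that the algebra ${\cal A}_{\frac1n X}$ decreases to the Fock space as $n\to\infty$, by a two-sided inclusion argument. For one direction, observe that if $f$ is entire (i.e., $f\in B$), then for every $a$ and every $r>0$, $f|_{|z-a|=r}$ extends holomorphically to the disc; hence $B\subseteq {\cal A}_Y$ for \emph{every} discrete set $Y$, and in particular $B\subseteq \bigcap_n {\cal A}_n$. (One should remark that the Fock space $B$ here is understood as the holomorphic functions in $L^2({\bf C},e^{-|z|^2})$; the statement $\bigcap_n{\cal A}_n=B$ is to be read as an equality of the underlying function classes together with the identification of norms, since each ${\cal A}_n$ carries the same $L^2$-Gaussian norm.) The content of the theorem is therefore the reverse inclusion: if $f\in {\cal A}_{\frac1n X}$ for all $n$, then $f$ is holomorphic on ${\bf C}$.

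For the reverse inclusion I would argue locally and fix an arbitrary point $a\in{\bf C}$ and an arbitrary radius $r>0$. Since $X$ is discrete with $\frac1n X\subseteq X$ for all $n$ and $X$ contains vertices of arbitrarily large non-degenerating triangles around $0$ (the standing hypothesis ensuring that ${\cal A}_X$ consists of real analytic functions, as in Theorems~1 and~2 of \cite{La2}), the dilated sets $\frac1n X$ become arbitrarily dense on every bounded region. Concretely, for any prescribed accuracy, choosing $n$ large enough we can find a point $p\in \frac1n X$ with $|p-a|$ as small as we like; then $f\in{\cal A}_n$ forces $f|_{|z-p|=\rho}$ to extend holomorphically to $|z-p|<\rho$ for \emph{every} $\rho>0$, in particular for $\rho$ slightly larger than $|p-a|+r$, so that the closed disc $\overline{D(a,r)}$ lies inside $D(p,\rho)$. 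Running this over a sequence $n_k\to\infty$ with centers $p_k\to a$ and radii $\rho_k\downarrow |p-a|+r$ (or better, a net of such centers dense near $a$) and invoking the uniqueness of one-variable holomorphic extension together with a normal families / Morera-type argument on overlapping discs, one concludes that $f$ is holomorphic on $D(a,r)$. Since $a$ and $r$ were arbitrary, $f\in{\cal O}({\bf C})$; combined with $f\in L^2({\bf C},e^{-|z|^2})$ (which is how membership in the intersection is normed) this gives $f\in B$.

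The step I expect to be the main obstacle is making the density argument quantitatively match the holomorphic-extension condition: one must be careful that, although $\frac1n X$ becomes dense near any fixed $a$, the \emph{definition} of ${\cal A}_n$ only asserts holomorphic extension on discs \emph{centered at points of $\frac1n X$}, so to cover a disc centered at an arbitrary $a$ one genuinely needs the centers $p\in\frac1n X$ to approach $a$, not merely to be scattered densely somewhere. This is where the hypothesis $\frac1n X\subseteq X$ (equivalently, $X$ invariant under all $\frac1n$-dilations) does the work: since $0$ is a limit of $\frac1n x$ for any fixed $x\in X$, and more generally every point of $\frac1m X$ is a limit (as $n$ runs through multiples of $m$) of points of $\frac1n X$, the union $\bigcup_n \frac1n X$ accumulates at a dense subset of ${\bf C}$, enough to pin down holomorphy on every small disc. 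The remaining details — patching the local holomorphic extensions into a single entire function and checking the norm identification — are routine applications of the identity theorem and the fact, already recorded as Theorem~4 above, that the Bergman-type closure sits inside ${\cal A}_X$.
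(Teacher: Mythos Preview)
Your argument for the reverse inclusion has a genuine gap. Membership in ${\cal A}_p$ only says that $f|_{|z-p|=\rho}$ \emph{admits} a holomorphic extension to the disc $|z-p|<\rho$; it does not say that this extension coincides with $f$ there. For instance $f(z)=|z|^2$ lies in ${\cal A}_0$: on $|z|=\rho$ it equals the constant $\rho^2$, which extends holomorphically, yet $f$ is nowhere holomorphic and the extensions for different $\rho$ are all different. So covering $\overline{D(a,r)}$ by a disc $D(p,\rho)$ with $p\in\frac{1}{n}X$ only produces a holomorphic function agreeing with $f$ on the single circle $|z-p|=\rho$; varying $p$ or $\rho$ gives unrelated holomorphic functions, and neither uniqueness of analytic continuation nor a Morera/normal-families argument can patch them into $f$. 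Note too that density of centers is irrelevant here: a single center with all radii already covers ${\bf C}$, yet ${\cal A}_p$ for one point contains many non-holomorphic functions. Your claim that $\bigcup_n\frac{1}{n}X$ accumulates on a dense set is also too strong --- for the paper's own example $X={\bf Z}\cup\omega{\bf Z}\cup\omega^2{\bf Z}$ the union lies on three lines through the origin.

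The paper records the theorem as ``clear'' and gives no proof, but the mechanism behind it is the structure result quoted in Section~1: every $f\in{\cal A}_Y$ is real analytic on ${\bf C}$ with a globally convergent expansion $f(z)=\sum_{k\ge 0}f_k(z)\,\overline z^{\,k}$ whose coefficients $f_k$ are entire, and the description ${\cal A}_Y=\langle z,\overline z\,g_Y(z)\rangle$ forces $g_Y^k\mid f_k$, so in particular $f_k$ vanishes on $Y$ for each $k\ge 1$. Applying this with $Y=\frac{1}{n}X$ for all $n$, each entire $f_k$ vanishes on $\bigcup_n\frac{1}{n}X$, which accumulates at $0$; hence $f_k\equiv 0$ for $k\ge 1$ and $f=f_0$ is entire. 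The point is that one uses the identity theorem on the \emph{holomorphic} coefficient functions $f_k$, not on the merely real-analytic $f$ or $\partial_{\overline z}f$.
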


Let $T_n$ be the orthogonal projection onto ${\cal A}_n$. Because of bounded point evaluation, we can represent $T_n$ by a kernel function $$K_n(z,w): T_n(f)(z)=\int_{\bf C}K_n(z,w)f(w)e^{-|z|^2|}dA(z).$$Let $K(z,w) $denote the kernel for the usual Bergman projection onto  Fock space. 
Because the point evaluation estimates are continuous, we have that $K_n(z,w)$ is a locally bounded sequence of functions. Clearly
$\lim_{n\rightarrow \infty}K_n(z,w)=K(z,w)$. From this, and because normal families real analytic spaces ${\cal A}_*$ behave like usual normal families,  we deduce that $K_n(z,w) \rightarrow K(z,w)$ locally uniformly on compacta, and real analytically as well. 

The Fock space is used in quantum mechanics in the Bargmann-Segal formalism. A possible application of our results is to extend the Segal-Bargmann formalism to these real analytic Bergman spaces. Perhaps there is an asymptotic formula as $n\rightarrow \infty$ in the limit which could be useful.

 \bibliographystyle{plain}

\end{document}